\newcommand{\keywords}[1]{\par\addvspace\baselineskip
\noindent\keywordname\enspace\ignorespaces#1}
\newtheorem{prop}[theorem]{Proposition}
\newtheorem{cor}[theorem]{Corollary}
\newcommand{\cH}{\mathcal{H}}
\newcommand{\cF}{\mathcal{F}}
\newcommand{\cQ}{\mathcal{Q}}
\newcommand{\cS}{\mathcal{S}}
\newcommand{\cT}{\mathcal{T}}
\newcommand{\FF}{\mathbb{F}}
\newcommand{\NN}{\mathbb{N}}
\newcommand{\ZZ}{\mathbb{Z}}
\newcommand{\pyth}{\textsc{pyth}}
\newcommand{\prim}{\textsc{prim}}
\newcommand{\schur}{\textsc{schur}}
\title{Coloring so that no Pythagorean Triple is Monochromatic}
\titlerunning{Coloring Pythagorean Triples}
\author{Joshua Cooper \and Ralph Overstreet}
\institute{University of South Carolina,
Department of Mathematics,\\
1523 Greene Street,
Columbia, SC 29208 \\
\mailsa,
\mailsb \\
\url{http://www.math.sc.edu/}}
\begin{document}

\maketitle

\begin{abstract}
% 150 word max
We address the question of the ``partition regularity'' of the Pythagorean equation \(a^2+b^2=c^2\); in particular, can the natural numbers be assigned a 2-coloring, so that no Pythagorean triple (i.e., a solution to the equation) is monochromatic?  We prove that the hypergraph of Pythagorean triples can contain no Steiner triple systems, a natural obstruction to 2-colorability.  Then, after transforming the question into one about 3-CNF satisfiability and applying some reductions, a SAT solver is used to find a 2-coloring for \(\{1,\ldots,7664\}\).  Work continues as we seek to improve the reductions and extend the computation.
\keywords{CNF, Linear 3-Uniform Hypergraph, Partial Triple System, Partition Regular, Pythagorean Triple, Ramsey Theory, SAT, Steiner Triple System}
\end{abstract}

\section{Introduction}
One of the central problems in Ramsey Theory on the integers (see, for example, \cite{LR04}) is the following question: given an equation -- or set of equations -- and a positive integer \(k\), does there exist a coloring of \(\NN\) so that there are no monochromatic solutions to the equation(s)?  That is, suppose the family \(\cF\) of subsets of \(\NN\) is defined to be the set of solutions \(\{x_1,\ldots,x_n\}\) to a system of equations in \(n\) variables.  Does there exist a function \(c : \NN \rightarrow [k]\) so that, for each \(F \in \cF\), there are \(x_i,x_j \in F\) with \(c(x_i) \neq c(x_j)\)?  A system of equations is called ``partition regular'' if, for any number of colors \(k\), every \(k\)-coloring of the integers admits a monochromatic solution.  Perhaps the most famous examples of partition regularity are the equations \(x + y = z\) (Schur's Theorem) and \(x + y = 2z\) (van der Waerden's Theorem); Rado's Theorem provides a vast generalization of these examples.

A question that is surprisingly resistant to extant methods is the partition regularity of the ``Pythagorean equation'' \(x^2 + y^2 = z^2\); see, e.g., \cite{CFHW14}.  To date, it is not even known if it is possible to 2-color the naturals so that there are no monochromatic Pythagorean triples.  However, there is a natural translation of this question into that of solving a certain 3-CNF satisfiability problem:
%\[
\begin{equation}
\bigwedge_{\substack{i,j,k \\ i^2 + j^2 = k^2}} \left ( x_i \vee x_j \vee x_k \right ) \wedge \left ( \neg x_i \vee \neg x_j \vee \neg x_k \right ) . 
\end{equation}
%\]
In this case, ``true'' and ``false'' assignments are the colors given to the indices of variables.

In order to show that the Pythagorean triples are partition regular, it suffices to demonstrate a subfamily of them which is not bipartite, i.e., any $2$-coloring induces a monochromatic triple.  (The existence of such a family follows from the de Bruijn-Erd\H{o}s Theorem, q.v. \cite{O83}.)  It is known that no two Pythagorean triples share two points, i.e., no leg and hypotenuse of one right triangle are the two legs of another integer right triangle.  Indeed, the two equations $x^2 + y^2 = z^2$ and $y^2 + z^2 = t^2$ yield two squares ($y^2$ and $z^2$) whose sum and difference are also squares.  One can find an elementary proof that no such pair of squares exists in Sierpi\'nski's classic text \cite{S88}.

Therefore, in our search for nonbipartite subsystems of triples, we need only consider those in which each pair of triples intersect in at most one point.  Such families are known as ``linear $3$-uniform hypergraphs'' (in the graph theory community) and ``partial triple systems'' or ``packings'' (in design theory).  The smallest nonbipartite partial triple system is the ubiquitous Fano plane $F_7$, the projective plane over $\FF_2$ with seven points and seven triples.  $F_7$ is also a ``Steiner triple system,'' meaning that each pair of points is contained in exactly one triple.  In fact, it is no hard to see that every Steiner triple system is not bipartite, so it is reasonable to search for these classic obstructions to 2-colorability among the Pythagorean triples.  However, we show below that a broad class of triple systems including the family $\pyth$ of all Pythagorean triples contains no Steiner triple systems.  Therefore, any search for nonbipartite partial triple systems in $\pyth$ necessarily must consider other systems than Steiner triple systems.

In the next section, we introduce some notation and definitions.  Section \ref{ref:noSTS} contains the proof that $\pyth$, and all ordered triple systems with the ``sum property'', contain no Steiner triple systems.  In the following sections, we describe the satisfiability solving methodology used to show that at least \([7664]\) is \(2\)-colorable without any monochromatic Pythagorean triples.  We conclude with some directions for further research.

\section{Preliminaries}

A {\it triple system} $\cH$ is a pair $(V,E) = (V(\cH),E(\cH))$ consisting of a vertex set $V$ and a family of unordered triples $E \subset \binom{V}{3}$.    A {\it partial triple system}, aka a {\it 3-uniform linear hypergraph}, is a triple system in which each pair of distinct edges intersect in at most one vertex.  A {\it Steiner triple system} is a partial triple system $\cS$ so that, for each pair of vertices $x$ and $y$, there is some $z$ so that $\{x,y,z\}$ is an edge of $\cS$.  An {\it ordered triple system} $\cH$ is a triple $(V,E,<) = (V(\cH),E(\cH),<_{\cH})$ consisting of a vertex set $V$, a family of unordered triples $E \subset \binom{V}{3}$, and a total ordering $<$ on $V$.  We say that an ordered triple system $\cH$ has the {\it sum property} if, whenever $\{a,b,c\}$ and $\{a^\prime,b^\prime,c^\prime\}$ are two edges with respective maxima $c$ and $c^\prime$,
\begin{equation}
%$$
(a \leq a^\prime) \wedge (b < b^\prime) \Rightarrow (c < c^\prime).
%$$
\end{equation}
\noindent {\bf Examples}:
\begin{enumerate}
\item Let $V = \ZZ$, and, for each $a \neq b$, let $e = (a,b,a+b)$ be an edge of $E$.  We call this the {\it Schur Triple System}, and denote it by $\schur$.  It is easy to see that $\schur$ has the sum property.  This shows immediately that the sum property does not imply finite colorability, since Schur's Theorem says that any coloring $\chi : \NN \rightarrow [c]$ with finitely many colors $c$ admits a monochromatic $e \in \schur$, i.e., $|\chi(e)| = 1$.  Such a map is called a (weak) hypergraph coloring.   A map $\chi : \NN \rightarrow [c]$ so that $|\chi(e)| = 3$ instead of just $|\chi(e)| > 1$ is known as a ``strong coloring,'' and corresponds exactly to a proper vertex coloring of the complement of the ``leave,'' i.e., the graph of all pairs contained in some triple $e \in E(\cH)$.  Some areas of mathematics call this the ``shadow'' graph or ``$1$-skeleton'' of $\cH$, and denote it by the boundary operator $\partial \cH$.
\item It is clear that any order-preserving isomorphic image of a triple system with the sum property also has the sum property, and that any subhypergraph of a triple system with the sum property does as well.  For example, we define the {\it Pythagorean Triple System} $\pyth$ by $V(\pyth) = \NN$ and $E(\pyth) = \{\{a,b,c\} : a^2 + b^2 = c^2\}$.  Since one can embed $\pyth$ into $\schur$ monotonically by the map $n \mapsto n^2$, $\pyth$ has the sum property as well.  It is a wide open problem to determine whether $\pyth$ has a weak coloring with finitely many colors.  (It is even open whether it is strongly colorable.)  As mentioned in the introduction, $\pyth$ is actually linear, i.e., no two edges intersect in more than one vertex.
\item A special subsystem of $\pyth$ is the {\it Primitive Pythagorean Triple System} $\prim$ consisting of all Pythagorean triples which are relatively prime.  That is, $V = \NN$ and
    \begin{equation}
    %$$
    E(\prim) = \{\{a,b,c\} : a^2 + b^2 = c^2 \textrm{ and } \gcd(a,b,c) = 1\}.
    %$$
    \end{equation}
    It is easy to see that $\pyth$ is actually a union of dilates of $\prim$ by each $d \in \NN$.  However, $\prim$ is bipartite: the parity coloring $n \mapsto n \pmod{2}$ provides a $2$-coloring.
\end{enumerate}

We define a special class of partial triple systems called ``bicycles.''  The $k$-bicycle has $2k+2$ vertices and $2k$ edges.  Its vertices are the elements of $\ZZ_{2k}$ and two ``antipodes'' $a$ and $b$; its edges are all triples of the form $\{a,2j,2j+1\}$ and $\{b,2j-1,2j\}$, $0 \leq j < k$.  The $2$-bicycle is also known as the {\it Pasch configuration}, or {\it quadrilateral}: the six-point partial triple system consisting of the edges $abc$, $ade$, $bef$, $cdf$.  The $3$-bicycle appears in the literature as the ``hexagon'' (e.g., \cite{CF08}): eight points $\{a,b,d,e,f,g,h,i\}$ with edges $\{afh,aei,adg,beh,bdi,bfg\}$.

\begin{figure}%[h]
\centering
\includegraphics[scale = .4]{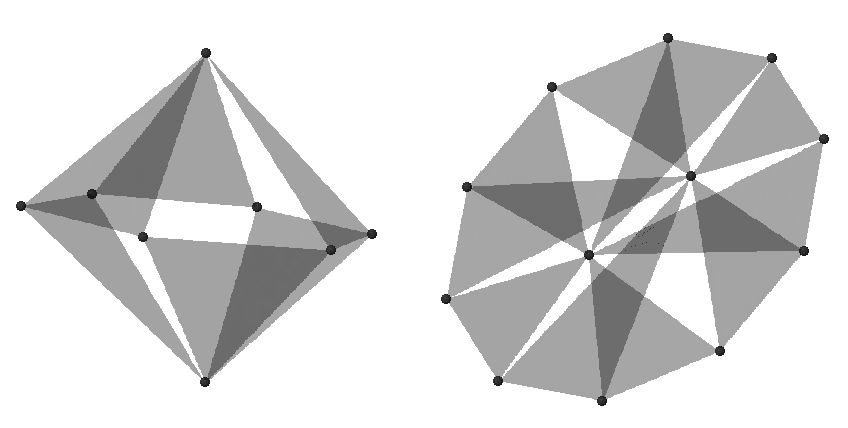}
\caption{A $3$-bicycle and a $5$-bicycle.} \label{fig:bicycles}
\end{figure}

The following proposition follows immediately from well-known results in the theory of triple systems.  (See, for example, \cite{CCR83}.)  For completeness, we give a short proof.

\begin{lemma} \label{prop:stshasbicycle} If $v,w$ are vertices of nontrivial Steiner triple system $\cS$, then there is a $k$-bicycle for some $k \geq 2$ in $\cS$ with antipodes $v$ and $w$.
\end{lemma}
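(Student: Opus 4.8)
The plan is to realize the bicycle as a single alternating cycle of two fixed-point-free involutions induced by the antipodes. First I would let $c$ be the unique third vertex for which $\{v,w,c\} \in E(\cS)$ (it exists by the Steiner property applied to the pair $\{v,w\}$) and set $U = V(\cS) \setminus \{v,w,c\}$. On $U$ I would define $\sigma_v \colon U \to U$ by letting $\sigma_v(x)$ be the unique vertex with $\{v,x,\sigma_v(x)\} \in E(\cS)$, and similarly $\sigma_w$ using $w$. Each map is an involution, since the triple $\{v,x,\sigma_v(x)\}$ is symmetric in its two non-$v$ vertices, and it is fixed-point-free. The routine but essential check is that $\sigma_v$ (and $\sigma_w$) actually preserves $U$: if $\sigma_v(x) \in \{w,c\}$ then the triple through $\{v,x\}$ would also contain the pair $\{v,c\}$ or $\{v,w\}$, and uniqueness of triples would force $x \in \{w,c\}$, contradicting $x \in U$.

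Next I would build the graph $G$ on vertex set $U$ whose edges are the pairs $\{x,\sigma_v(x)\}$ (colored \emph{$v$}) together with the pairs $\{x,\sigma_w(x)\}$ (colored \emph{$w$}). Because $\sigma_v$ and $\sigma_w$ are fixed-point-free involutions of $U$, every vertex of $G$ meets exactly one $v$-edge and one $w$-edge, so $G$ is $2$-regular and each connected component is a cycle whose colors alternate. Writing such a component as $x_0 \xrightarrow{v} x_1 \xrightarrow{w} x_2 \xrightarrow{v} \cdots \xrightarrow{w} x_{2k-1} \xrightarrow{w} x_0$ and relabeling $x_i$ by $i \in \ZZ_{2k}$, the $v$-edges yield precisely the triples $\{v,2j,2j+1\}$ and the $w$-edges the triples $\{w,2j-1,2j\}$, so the component together with the antipodes $v,w$ is exactly a $k$-bicycle. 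Since $\cS$ is nontrivial it has more than three points, whence $U \neq \emptyset$ and at least one such component exists.

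The main obstacle — and the only point requiring real care — is excluding degenerate short cycles, i.e., showing each component has $2k$ distinct vertices with $k \geq 2$. A component of length two would require $\sigma_v(x) = \sigma_w(x) =: y$ for some $x$, making $\{v,x,y\}$ and $\{w,x,y\}$ two distinct triples meeting in the two vertices $x,y$; this is forbidden because a Steiner triple system is in particular a partial triple system, so distinct edges intersect in at most one vertex. The same linearity rules out parallel $v$- and $w$-edges and guarantees the alternating cycles are genuine (no repeated vertices), so each component is an honest $k$-bicycle with $k \geq 2$ and antipodes $v$ and $w$, as claimed.
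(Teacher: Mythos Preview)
Your proof is correct and follows essentially the same route as the paper's: both remove the third point of the triple through $v$ and $w$ (your $c$, the paper's $z$), observe that the links of $v$ and $w$ restrict to two perfect matchings on the remaining vertices (your involutions $\sigma_v,\sigma_w$, the paper's $M_1,M_2$), and take any even cycle in their union as the desired bicycle. Your treatment is actually more careful than the paper's about why $k \geq 2$; note only a small typo in your alternating sequence, where the arrow into $x_{2k-1}$ should be labelled $v$ rather than $w$.
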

\begin{proof} Define the ``link'' $\cS_x$ of a vertex $x \in \cS$ to be the set of pairs $\{a,b\}$ so that $\{a,b,x\}$ is an edge of $\cS$.  For some $z \in \cS$, $\{v,w,z\}$ is a triple of $\cS$, so $\cS_v$ consists of a perfect matching $M_1$ on $\cS - \{v,w,z\}$, plus the edge $\{w,z\}$; similarly, $\cS_w$ consists of a perfect matching $M_2$ on $\cS - \{v,w,z\}$, plus the edge $\{v,z\}$.  The union of $M_1$ and $M_2$ is composed of even-length cycles of length at least $4$; any one of these forms a bicycle with antipodes $v$ and $w$.
\end{proof}

We now define two weaker implicants of the sum property.  We say that an ordered triple system has the {\it upper sum property} if, whenever $\{a,b,c\}$ and $\{a,b^\prime,c^\prime\}$ are two edges with $c$ and $c^\prime$ their respective maxima,
\begin{equation}
%$$
(b > b^\prime) \Rightarrow (c > c^\prime).
%$$
\end{equation}
This clearly follows from the sum property by setting $a = a^\prime$.  We say that an ordered triple system has the {\it lower sum property} if, whenever $\{a,b,c\}$ and $\{a^\prime,b^\prime,c\}$ are two edges with $c$ as both of their maxima,
\begin{equation}
%$$
(a > a^\prime) \Rightarrow (b < b^\prime).
%$$
\end{equation}
To see that the lower sum property follows from the sum property, suppose that $a > a^\prime$ but $b > b^\prime$.  Then it follows that the maximal element of $\{a,b,c\}$ is greater than the maximal element of $\{a^\prime,b^\prime,c\}$, whence $c > c$, a contradiction.  We say that two pairs of integers $\{a,b\}$ and $\{c,d\}$ are ``nesting'' if $a < c < d < b$.  Then it is possible to restate the upper sum property as the fact that, for each vertex $x$, the subset of the link graph $\cH_x$ intersecting $\{y : y \geq x\}$ is a non-nesting matching.  The lower sum property may be similarly restated as the fact that, for each vertex $x$, the subset of the link graph $\cH_x$ induced by $\{y : y \leq x\}$ is a fully nested matching, i.e., for each two edges $e$ and $f$, $e$ is nested in $f$ or vice versa.

\section{Sum Property implies No STS} \label{ref:noSTS}

\begin{prop} \label{prop:noquadupper} If $\cH$ has the upper sum property, and $\cQ$ is a $k$-bicycle in $\cH$, then the maximal two points of $\cQ$ are not its antipodes.
\end{prop}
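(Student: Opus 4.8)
The plan is to argue by contradiction. Suppose the two antipodes, which I will call $p$ and $q$, are the two largest vertices of $\cQ$, and let $v_0,\dots,v_{2k-1}$ denote the rim vertices (the vertices of $\cH$ corresponding to the elements of $\ZZ_{2k}$), so that the edges of $\cQ$ are the $p$-edges $\{p,v_{2j},v_{2j+1}\}$ and the $q$-edges $\{q,v_{2j-1},v_{2j}\}$ for $0\le j<k$, with indices read modulo $2k$. Since $p$ and $q$ exceed every rim vertex by assumption, $p$ is the maximum of each $p$-edge and $q$ is the maximum of each $q$-edge. Because the statement is symmetric in the two antipodes, I may assume without loss of generality that $p>q$.

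The crucial step is to apply the upper sum property at each rim vertex rather than at the antipodes (where it gives nothing, since the antipode is always the maximum). Each $v_i$ lies in exactly one $p$-edge and one $q$-edge, and by linearity of $\cQ$ these two edges meet only in $v_i$; hence they play the roles of $\{a,b,c\}$ and $\{a,b',c'\}$ with common vertex $v_i$, their maxima being $p$ and $q$, and their two ``middle'' vertices being the two rim-neighbors of $v_i$ (its $p$-partner and its $q$-partner). Applying the upper sum property in its contrapositive form, together with $p>q$, forces the $q$-partner of $v_i$ to be smaller than its $p$-partner. Reading the partners off the edge lists, at each even rim vertex $v_{2j}$ this yields the inequality $v_{2j-1}<v_{2j+1}$.

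Finally I chain these inequalities around the rim. The relations $v_{2j-1}<v_{2j+1}$ for $0\le j<k$ link the odd-indexed rim vertices into the cyclic chain $v_{2k-1}<v_1<v_3<\cdots<v_{2k-3}<v_{2k-1}$, which is absurd (the even rim vertices give the symmetric contradiction $v_0>v_2>\cdots>v_{2k-2}>v_0$, so either family suffices). This contradiction shows the antipodes cannot be the two maximal points of $\cQ$.

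The main obstacle is the second step: correctly identifying how the upper sum property, invoked at a rim vertex, converts into a comparison between that vertex's two cycle-neighbors, and keeping the bookkeeping of $p$- versus $q$-partners straight so that all the local comparisons point the same way around the cycle. Pinning down this orientation (which is where the hypothesis $p>q$ is used) is the only delicate point; once it is settled, the cyclic contradiction in the last step is immediate.
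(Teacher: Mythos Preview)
Your argument is correct and is essentially the same as the paper's: both proofs assume the antipodes are maximal, pivot the upper sum property at each rim vertex to compare its two rim-neighbors, and obtain a cyclic chain of strict inequalities among the rim vertices. The only cosmetic difference is that the paper pivots at the odd-indexed rim vertices to get the even chain $0>2>\cdots>2k-2>0$, while you pivot at the even-indexed rim vertices to get the odd chain $v_{2k-1}<v_1<\cdots<v_{2k-1}$; these are symmetric. (Minor wording note: in your parenthetical, the even chain $v_0>v_2>\cdots>v_0$ comes from pivoting at the \emph{odd} rim vertices, not the even ones.)
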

\begin{proof} Let
\begin{equation}
%$$
\cQ = (\ZZ_{2k} \cup \{a,b\},\{a01,a23,a45,\ldots\}\cup\{b12,b34,b56,\ldots\}) \subset \cH
%$$
\end{equation}
be a $k$-bicycle, and suppose $a$ and $b$ are the maximal two points of $\cQ$.  We may assume without loss of generality that $a > b$.  Then the maximal elements of all triples are $a$ or $b$ (depending on which antipode they contain).  Therefore, since $\{a,2j,2j+1\} \cap \{b,2j+1,2j+2\} = \{2j+1\}$,
\begin{equation}
%$$
(a > b) \Rightarrow (2j > 2j+2)
%$$
\end{equation}
for each $0 \leq j < k$.  However, the quantities above are modulo $2k$, whence the set of resulting inequalities is circular and therefore inconsistent.
\end{proof}

\begin{prop} \label{prop:noquadlower} If $\cH$ has the lower sum property, and $\cQ$ is a $k$-bicycle in $\cH$, then the maximal two points of $\cQ$ are not its antipodes.
\end{prop}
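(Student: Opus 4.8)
The plan is to follow the template of Proposition~\ref{prop:noquadupper}, but to exploit the lower sum property by comparing pairs of edges that share a common \emph{maximum} rather than a common middle vertex. Writing the bicycle with antipodes $a$ and $b$, $a$-edges $\{a,2j,2j+1\}$ and $b$-edges $\{b,2j+1,2j+2\}$ for $0 \le j < k$ (indices read modulo $2k$), suppose toward a contradiction that $a$ and $b$ are the two largest points of $\cQ$, say $a > b$ without loss of generality. Then every vertex of $\ZZ_{2k}$ lies below both antipodes, so each edge attains its maximum at its antipode: all $k$ of the $a$-edges have maximum $a$, and all $k$ of the $b$-edges have maximum $b$.

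Next I would apply the lower sum property within each of these two groups. Deleting the antipode from each edge yields two perfect matchings on $\ZZ_{2k}$, namely $M_a = \{\{2j,2j+1\} : 0 \le j < k\}$ and $M_b = \{\{2j+1,2j+2\} : 0 \le j < k\}$. For two $a$-edges, which share the maximum $a$, the lower sum property says that the edge with the smaller minimum has the larger middle; that is precisely the statement that one of the pairs nests inside the other, so $M_a$ is a fully nested matching. (Equivalently, this is the restatement of the lower sum property from the Preliminaries applied at the vertex $a$, using that all of $\ZZ_{2k}$ lies below $a$.) The identical argument at $b$ shows that $M_b$ is fully nested as well.

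The crux is then a uniqueness observation: a finite linearly ordered set of even size admits exactly one fully nested perfect matching. Indeed, the pair through the global minimum must, by full nesting, contain every other pair, hence pairs the minimum with the maximum; deleting those two points and inducting forces the concentric ``rainbow'' pairing $\{x_i,x_{2k+1-i}\}$. It follows that $M_a = M_b$. But by the defining structure of the $k$-bicycle, $M_a \cup M_b$ is a single cycle of length $2k$ on $\ZZ_{2k}$, so for $k \ge 2$ the matchings $M_a$ and $M_b$ are distinct --- the desired contradiction.

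I expect the only genuine subtlety to be the uniqueness of the fully nested matching together with the bookkeeping that $M_a$ and $M_b$ jointly trace out a single $2k$-cycle (so that they cannot coincide); once these are in hand, the contradiction $M_a = M_b \ne M_a$ is immediate, and the surrounding setup is identical to that of Proposition~\ref{prop:noquadupper}.
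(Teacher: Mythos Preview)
Your argument is correct, and the setup matches the paper's, but the endgame is genuinely different. After establishing that $M_a$ is fully nested, the paper does \emph{not} argue symmetrically at $b$; instead it localizes: it fixes the outermost $M_a$-pair $\{0,1\}$ with $0<1$ (so every other cycle vertex lies strictly between $0$ and $1$), and then applies the lower sum property at $b$ only to the two $b$-edges through $0$ and $1$, namely $\{b,-1,0\}$ and $\{b,1,2\}$. Nesting of $\{-1,0\}$ and $\{1,2\}$ forces either $-1<1,2<0$ or $1<-1,0<2$, each contradicting $0<1$. Your route instead shows that $M_b$ is also fully nested, invokes the uniqueness of the ``rainbow'' fully nested perfect matching to conclude $M_a=M_b$, and then uses that $M_a\cup M_b$ is a single $2k$-cycle (so $M_a\neq M_b$ for $k\ge 2$). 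Your version is more structural and the uniqueness lemma is a clean reusable fact; the paper's version is shorter and avoids the auxiliary lemma by extracting the contradiction from just two $b$-edges.
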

\begin{proof} Let
    \begin{equation}
%$$
\cQ = (\ZZ_{2k} \cup \{a,b\},\{a01,a23,a45,\ldots\}\cup\{b12,b34,b56,\ldots\}) \subset \cH
%$$
    \end{equation}
be a $k$-bicycle, and suppose $a$ and $b$ are the maximal two points of $\cQ$.  We may assume without loss of generality that $a > b$.  Then the maximal elements of all triples are $a$ and $b$ (depending on which antipode they contain).  Since $\{a,2j,2j+1\}$ is an edge for each $0 \leq j < k$, the $k$ pairs $\{2j,2j+1\}$ are a matching, and they are linear ordered by nesting, by the lower sum property.  Suppose, without loss of generality, that $\{0,1\}$ is the outermost matching edge and $0 < 1$, so that, for all $x \in \ZZ_{2k} \setminus \{0,1\}$, $0 < x < 1$.  The pairs $\{-1,0\}$ and $\{1,2\}$ are also nested, since they arise from the edges $\{b,-1,0\}$ and $\{b,1,2\}$.  Hence, $-1 < \{1,2\} < 0$ or $1 < \{-1,0\} < 2$, and each of these possibilities contradicts $0 < 1$.
\end{proof}

\begin{cor} If $\cH$ has the full, lower, or upper sum property, then it does not contain any Steiner triple system.  In particular, $\pyth$ contains no Steiner triple system.
\end{cor}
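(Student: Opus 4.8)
The plan is to assemble this Corollary from the three ingredients already in hand: the reduction of the full sum property to its one-sided variants, the structural Lemma \ref{prop:stshasbicycle} producing bicycles inside a Steiner triple system, and Propositions \ref{prop:noquadupper} and \ref{prop:noquadlower}, which forbid a bicycle from having its two antipodes as its two largest vertices. First I would observe that under each of the three hypotheses, $\cH$ enjoys the upper sum property or the lower sum property: these are the hypotheses outright in two of the cases, and in the remaining case the full sum property implies both (the upper property by setting $a = a^\prime$, and the lower property by the short argument given in the Preliminaries). Hence in every case at least one of Propositions \ref{prop:noquadupper} and \ref{prop:noquadlower} applies to bicycles in $\cH$, and it suffices to run a single contradiction argument and invoke whichever of the two is available.

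Suppose, for contradiction, that $\cH$ contains a nontrivial Steiner triple system $\cS$. (The trivial systems---a single edge on three vertices---are harmless and in fact occur in $\pyth$, e.g.\ $\{3,4,5\}$; it is the nontrivial ones, of order at least $7$, that must be excluded.) Let $v$ and $w$ be the two $<_{\cH}$-largest vertices of $\cS$. By Lemma \ref{prop:stshasbicycle} there is a $k$-bicycle $\cQ \subseteq \cS$ with $k \geq 2$ whose antipodes are exactly $v$ and $w$. Every vertex of $\cQ$ other than its antipodes is a genuine vertex of $\cS$, hence $<_{\cH}$-smaller than both $v$ and $w$; therefore $v$ and $w$ are precisely the two maximal points of $\cQ$. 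This contradicts the applicable Proposition (\ref{prop:noquadupper} or \ref{prop:noquadlower}), each of which asserts that the maximal two points of a bicycle in $\cH$ cannot be its antipodes. The contradiction shows that $\cH$ contains no nontrivial Steiner triple system.

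For the concluding sentence I would appeal to Example 2 of the Preliminaries: the map $n \mapsto n^2$ embeds $\pyth$ monotonically into $\schur$, so $\pyth$ inherits the full sum property, and the general statement then applies verbatim to conclude that $\pyth$ contains no nontrivial Steiner triple system.

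The step requiring the most care is the choice of $v$ and $w$ as the \emph{two largest} vertices of $\cS$, since this is exactly what forces the bicycle's antipodes to coincide with its maximal pair and thereby triggers the contradiction. This selection is clean when $\cS$ is finite but has no analogue for an infinite Steiner triple system, whose vertex set (e.g.\ an unbounded subset of $\NN$ in the case of $\pyth$) need possess no largest elements. Because the nonbipartite obstructions of interest here are finite---this is precisely the content of the de Bruijn--Erd\H{o}s reduction cited in the Introduction---I would restrict attention to finite $\cS$ and state this restriction explicitly rather than attempt to treat the unbounded case.
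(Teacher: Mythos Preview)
Your proof is correct and follows essentially the same approach as the paper: pick the two maximal vertices of a putative Steiner triple system, invoke Lemma~\ref{prop:stshasbicycle} to obtain a bicycle with those vertices as antipodes, and contradict Proposition~\ref{prop:noquadupper} or~\ref{prop:noquadlower}. You are in fact more careful than the paper's terse proof about the triviality and finiteness caveats, both of which the paper glosses over.
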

\begin{proof} Suppose $\cH$ contained some Steiner triple system $\cT$.  Let $a$ and $b$ be the maximal elements of $\cT$.  Then $a$ and $b$ are the antipodes of some bicycle, by Lemma \ref{prop:stshasbicycle}.  However, this contradicts Propositions \ref{prop:noquadupper} and/or \ref{prop:noquadlower}.
\end{proof}

Note that a triple system with the sum property {\em can} contain a quadrilateral: for example, $\schur$ contains $\{5,15,20\}$, $\{5,8,13\}$, $\{7,8,15\}$, $\{7,13,20\}$.

%\section{The Computational Approach on a Single Machine}
\section{The Computational Approach}
% Introduction to the Computational Approach

We now describe the methodology used to certify the 2-colorability of the induced hypergraph of \(\pyth\) on the vertex set \([7664]\).

  \subsection{Listing the Triples up to N}
    To make a list of all Pythagorean triples having only elements less than or equal to some upper bound N, we use a function based on Dickson's Method, from Wikipedia \cite{Dickson}. Our function for generating triples uses the equations $r^2 = 2st$ for $r, s$, and $t \in  \mathbb{N}$, and $x = r+s, y = r+t$ and $z = r+s+t$, with $x^2+y^2=z^2$. With these equations, we step through all combinations of $s$ and $t$ so that $r \in \mathbb{N}$ and $z\leq$ N. The Python code is short.
    \begin{lstlisting}
    import numpy as np
    def dicksonGenTriples(uBound):
        data = []
        sSteps = range(1,uBound)
        for s in sSteps:
            tSteps = range(s,uBound-s)
            for t in tSteps:
                r = np.sqrt(2*s*t)
                if r == np.round(r):
                    r = int(r)
                    z = r + s + t
                    if z >= uBound:
                        break
                    x = r + s
                    y = r + t
                    if [x,y,z] not in data:
                        data.append([x,y,z])
        with open(`triples-dickson.json',`w') as f:
            json.dump(data, f)
        return data
    \end{lstlisting}
  \subsection{Writing the CNF}
  We remapped the integers involved in the Pythagorean triples to consecutive integers starting with 1 since, some SAT solvers require it. The bijection for remapping is represented in the Python code by a dictionary.
    \begin{verbatim}
  {"3": 1, "4": 2, "5": 3, "6": 4, "8": 5, "10": 6}
    \end{verbatim}
    The following is a CNF file for the triples with integers up to 10. These are [3,4,5] and [6,8,10].
    \begin{lstlisting}
        c 10
        p cnf 6 4
        1 2 3 0
        -1 -2 -3 0
        4 5 6 0
        -4 -5 -6 0
    \end{lstlisting}
    A full specification for CNF (Conjunctive Normal Form) is given at \cite{CNF}.
  \subsection{Choosing a Solver}
    We experimented with glueSplit\_clasp, Dimetheus, SWDiA5BY, PeneLoPe, Lingeling, and Treengeling, all from \url{http://www.satcompetition.org/} in 2014. Treengeling is a parallel solver that runs on some or all of the cores of a single machine, and seemed to be the fastest for a CNF corresponding to the set of triples up to about 7000. These runs completed within a few hours on each of the many solvers. We did not emphasize finding the best solver.
  %\subsection{Visualizing Colorings}
  %  For the coloring in figure \ref{fig:smColoring}, the falses are gray, and the trues are black. White is for integers which do not occur in any triple, with all constituent integers of the triple being less than or equal to some upper bound, N. The solver colors every integer which it receives from any triple as either false or true. While some integers might be colorable as either false or true in a particular coloring, the solver will assign some color to each of these integers along with every other integer that it receives.
 %   \begin{figure}[h]
 %     \begin{center}
%	\includegraphics[width=12.0cm]{}
%	\caption{The coloring for all the natural numbers up to 100. False = Grey.
%	True = Black. Not appearing in a triple up to 100 = White. 3 is 3 steps up from the bottom left corner. 10 is one step to the right of the bottom left corner. The right most column has some white space above the rectangle that represents 100.}
%	\label{fig:smColoring}
%      \end{center}
%    \end{figure} 
    \subsection{Reducing the Size of the Input to the Solver}
    %We note that a list of triples is 2-colorable iff a subset of that list is 2-colorable where every subset of triples removed from 
    %the original list corresponds to an always 2-colorable subgraph. 
    A natural goal of SAT solving is to ease the burden on the solver by finding parts of the hypergraph, corresponding to the solver's original CNF, which can be safely removed without changing the satisfiability of the new, reduced CNF. We have had to rely on these reductions for upper bounds above 7620. %Our hypergraphs sometimes have what we call always removable subhypergraphs which, if connected to the original hypergraph in the right way, can be safely removed without changing the satisfiability of the new CNF compared to the original.  

    Our only success to date at reducing the size of the input to the solver in this way, is ``pendant removal''. By pendant, we mean any edge (triple) of the 3-uniform hypergraph which has at least one vertex not contained in any other edges. Once a triple system with a pendant removed is properly 2-colored, the pendant's vertex, or vertices, of degree one can then be assigned any color that is not assigned to some other vertex of its edge (of which there is always at least one). Pendants are always removable and can be removed iteratively until no more pendants remain. Once these pendants have been removed from a graph and the remaining graph has been fed to the solver and colored, it is easy to add back the removed pendant edges and 2-color them as they are added back. The edges are added back in the reverse order that they were removed.
\section{Parallelizing Across Multiple Computer Cluster Nodes}
  \subsection{The Overall Strategy}
    We use a strategy to parallelize across multiple nodes of a cluster similar to one used in a paper by Biere, Heule, Kullmann, and Wieringa \cite{BHKW12}. In particular, we convert the problem into $2^m$ many potentially easier problems, by making $2^m$ copies of the original CNF file and appending a distinct list of extra clauses to each one. We run these new CNF files on up to $2^m$ cluster nodes; one node per CNF. As soon as any node completes, if that CNF is satisfiable, we have our solution and can force all the other nodes to terminate before completing. The various nodes do not communicate amongst themselves and run independently from one another.
    
    To create the extra clauses for each of the $2^m$ copies of the original CNF file, we select $m$ special vertices (integers), each coming from some Pythagorean triple which was used to generate the original CNF. 
    For each special vertex, $i$, we make a new clause
    \begin{equation}
    ( \neg x_i \vee \neg x_i \vee \neg x_i )
    \end{equation}
    or
    \begin{equation}
    ( x_i \vee x_i \vee x_i ).
    \end{equation}
    This gives us $2^m$ different lists of $m$ extra clauses to add to each of the $2^m$ copies of the original CNF file. These extra $m$ clauses provide the solver with predetermined truth assignments for specific vertices.  Either one of these $2^m$ runs of the SAT solver, on the slightly varying CNF files, will complete and provide a valid coloring, or all the runs will complete and each certify unsatisfiability -- given sufficient computational time.
    \subsection{An Example CNF File Modified for Parallelization}
    We present one of the 4 CNF files for the list of triples [[3,4,5],[6,8,10]], split on 2 vertices; 3 and 4, which are remapped to 1 and 2. %, as the solver needs consecutive integer variable names. 
    The line which begins with p is also updated by adding $m = 2$ to the last integer in the original CNF file on that line. The original CNF had a 4 in that spot. Note especially the last 2 clauses.
    \begin{lstlisting}
    c 10
    p cnf 6 6
    1 2 3 0
    -1 -2 -3 0
    4 5 6 0
    -4 -5 -6 0
    1 1 1 0
    -2 -2 -2 0
    \end{lstlisting}
  \subsection{Choosing Vertices to Split On} % Biere used the word split but it's a bit of a misnomer. Each new instance is easier but not smaller.
    It would be desirable to choose a list of $m$ special vertices so that each vertex in the list is relatively independent from the others. By independence, we mean that any assignment of truth values to the variables corresponding to special vertices leads to a similar number of (partial) satisfying assignments.  We have attempted to gauge independence by measuring the time needed for each of the $2^m$ truth value combinations to run using Treengeling. The minimum distance in the hypergraph of all Pythagorean triples, up to an upper bound, $N$, between any two of the special vertices is a reasonable proxy for their independence. Under current technical limitations, we have found $m = 2$ to run the fastest. %Trial and error has tentively persuaded us that $m$ should equal $2$ for our problem. 
    \subsubsection{The BFS Method of Finding Distant Vertices}
    A promising method of choosing some $v_1$ and $v_2$ which are distant from one another in the hypergraph of Pythagorean triples is to use what we call the breadth-first search (BFS) method. By way of illustration, we present Figure \ref{fig:nbrsgraph} and Table \ref{tab:nbrs}.  We start from a seed triple $t_0$, and use the method to find a $t_{11}$ that is furthest from $t_0$.
    \begin{figure}%[h]
      \begin{center}
	\includegraphics[width=12.0cm,height=6.0cm]{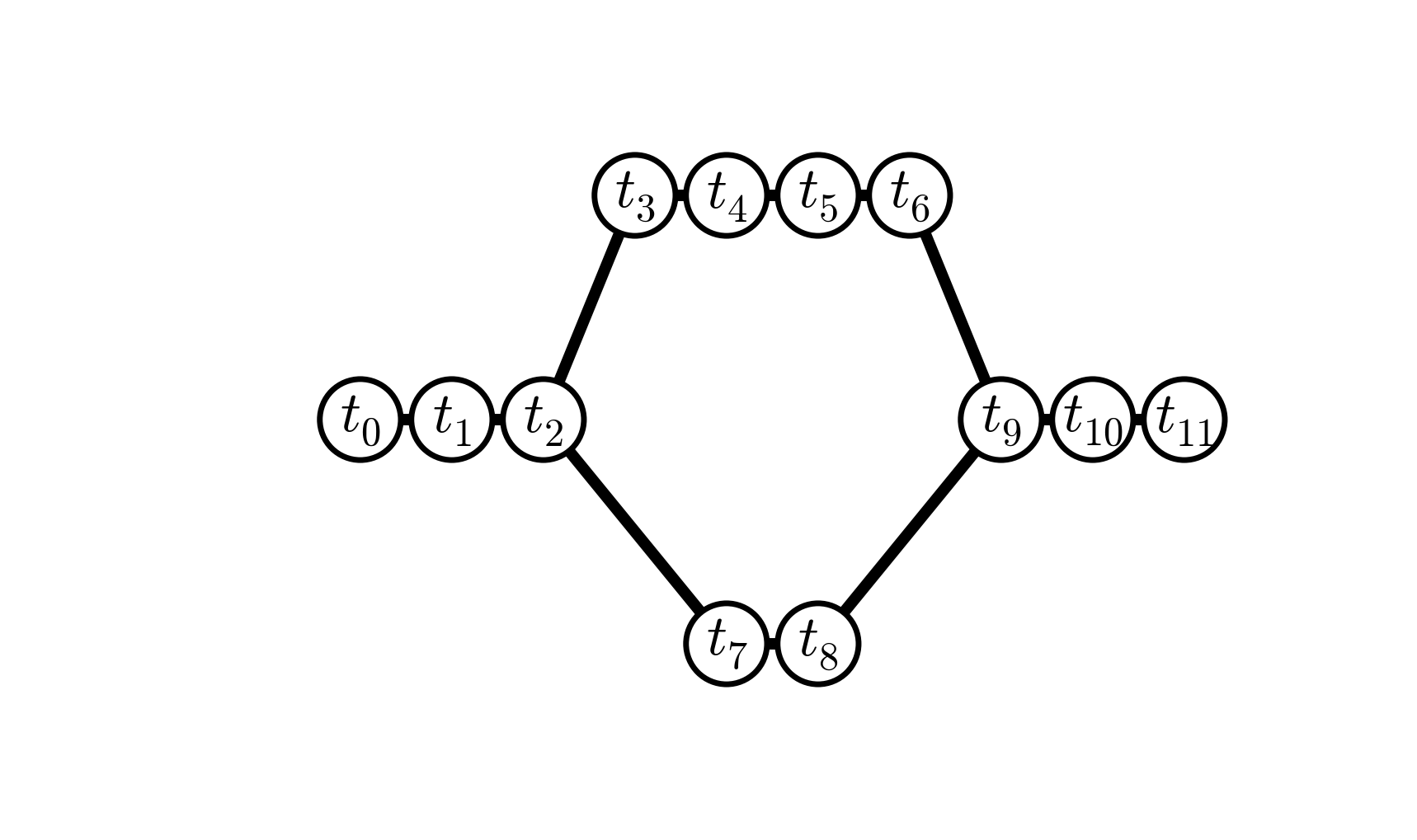}
	\caption{A graph of some hypothetical list of triples where any 2 triples share an edge if they intersect. For example $t_0$ = [3,4,5] and $t_1$ = [5,12,13] share an edge in a graph of this type, as the 2 triples share a vertex.} %This graph and the table of levels exemplifies the neighbors of neighbors method of choosing vertices.}
	\label{fig:nbrsgraph}
      \end{center}
     \end{figure}
     \begin{table}%[h]
    \centering
    \caption{The triples of each BFS level corresponding to the graph in Figure \ref{fig:nbrsgraph}.}
    \begin{tabular}{ c c c c c c c c c}
  level & 0 & 1 & 2 & 3 & 4 & 5 & 6 & 7 \\
   & $t_0$ & $t_1$ & $t_2$ & $t_3$ & $t_4$ & $t_5$ & $t_6$ & $t_{11}$ \\
    & & & & $t_7$ & $t_8$ & $t_9$ & $t_{10}$ & \\
   \end{tabular}
    \label{tab:nbrs}
    \end{table}
     Once a triple appears in a level, it will not appear again in the list. When there is more than one path from the seed to some other triple, the listing in Table \ref{tab:nbrs} shows only the shortest path. Thus any two triples which are in two distant levels from one another, necessarily have a high minimum graph distance between them.% These are vertices we can use for splitting.
     
    In the Pythagorean Triple System, to find two vertices that are relatively far apart, we choose a seed triple with minimum constituent integers as small as possible. In the example of Figure \ref{fig:nbrBars}, we run the BFS method on a list of triples that has had its pendants removed.
    \begin{figure}%[h]
      \begin{center}
	    \includegraphics[width=12.0cm]{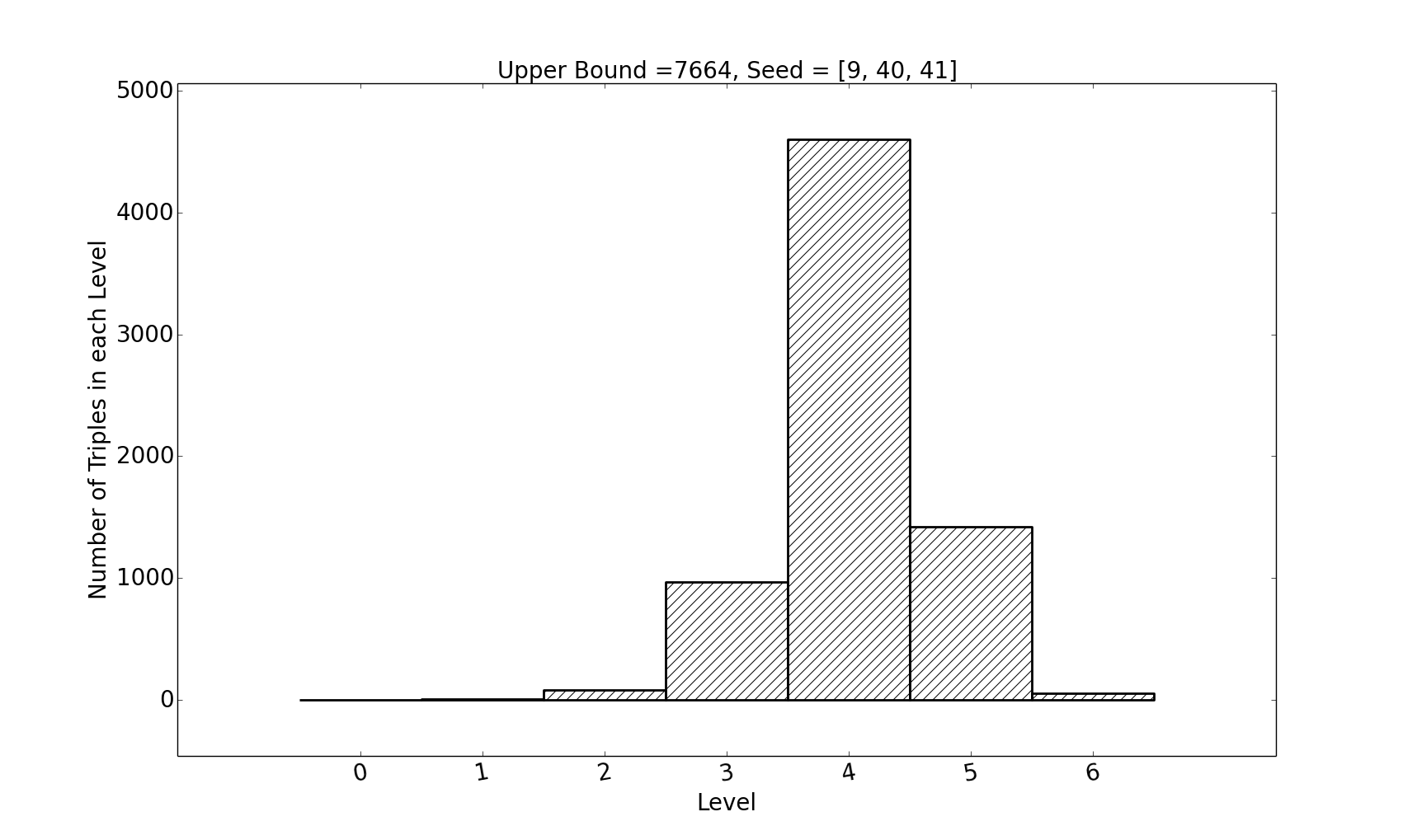}
	    \caption{The bar plot shows the size of each BFS level for the list of pendant-removed triples with all vertices less than or equal to 7664.}
	    \label{fig:nbrBars}
      \end{center}
    \end{figure}
    The plot gives an indication of the proximity of two vertices to each other and for the connectedness, loosely defined, of each of the two vertices to the rest of the hypergraph.
    \subsubsection{Gauging the Independence of a List of Special Vertices}
    As an illustration of a methodology to gauge the independence of a set of vertices, we choose a list of 4 randomly selected special vertices and uniformly at random remove \( 10 \% \) of the triples in the list of triples with vertices less than or equal to some upper bound around say, 7621. This speeds up the SAT solver computations from about a day to about 100 seconds. Each run uses 1 node with 12 cores. Next we record the run time for 10 runs each, of the 16 truth value combinations, for the 4 special vertices.  Each of the 10 runs of the SAT solver uses a different reduced list of triples. There are a total of 10 reduced triple lists for the whole process. See Figure \ref{fig:times}.
    \begin{figure}%[h]
    \centering
    \includegraphics[width=12.0cm]{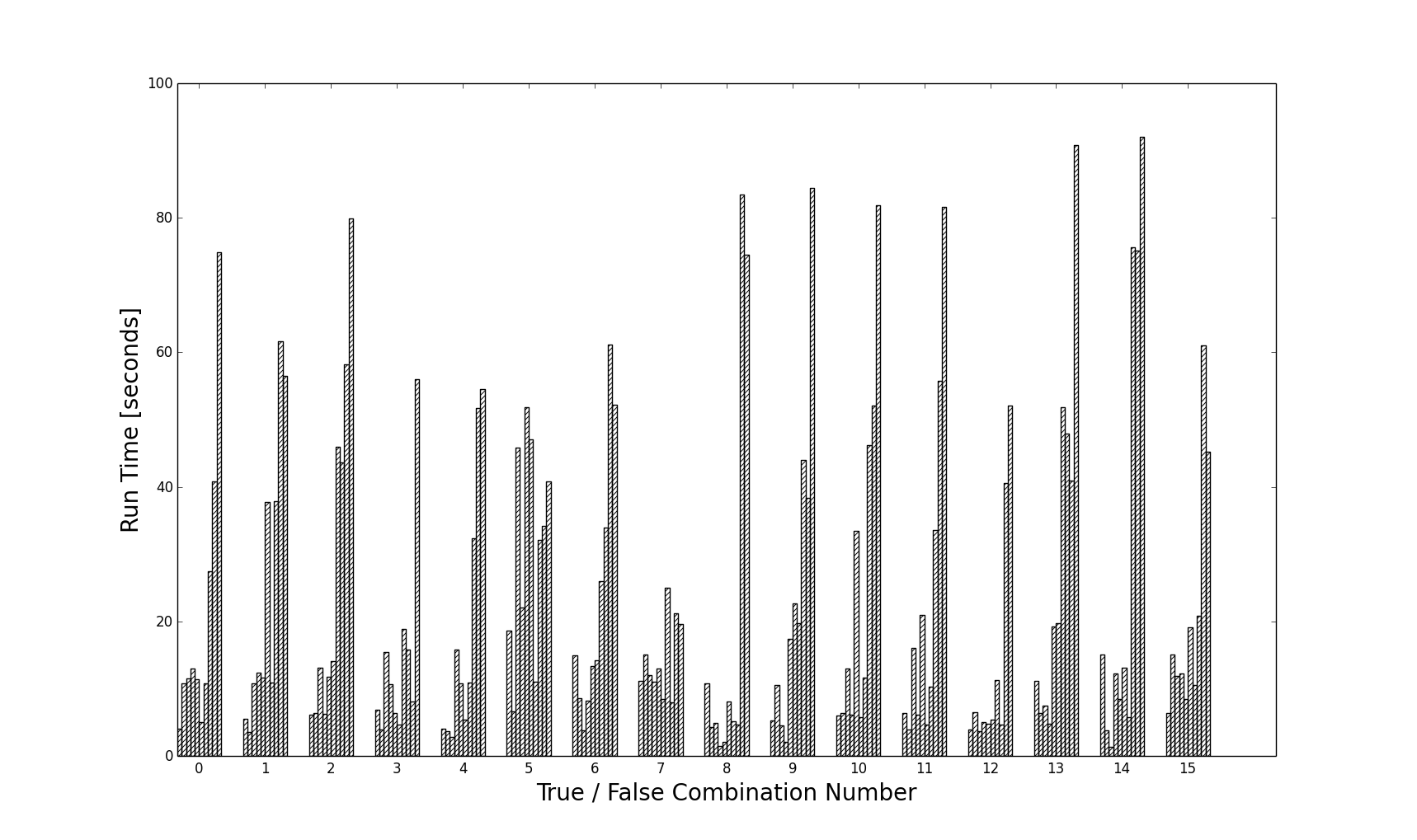}
    \caption{Run times of 10 runs of each of the 16 possible truth values of the 4 special vertices. The special vertices were chosen uniformly at random.}
    \label{fig:times}
    \end{figure}
    For each of the 16 truth value assignments, we compute an empirical mean run time; then, the sample variance of these 16 numbers is then computed.  For a set of chosen special vertices, a low variance indicates high independence, since the solver worked approximately equally hard across the different possible truth values.
  \subsection{Running a Node Pool on Clusters}
    We have devised a method for simultaneously running a fixed number of nodes, each running an instance of the solver on
    12 cores. %A node on the cluster that we use is a server, having a 12 core CPU chip. The cluster consists of about 40 nodes screwed to some server racks. 
    We are allowed to use up to about 10 of these nodes at a time, after waiting in the queue for some amount of time. See a detailed description of our clusters at the University of South Carolina, Maxwell and Planck, at\\\\
    \url{http://www.sc.edu/about/offices_and_divisions/division_of_information_technology/rci/hpc_resources/index.php}\\
    
    We use the two clusters interchangeably. Their specifications are as follows.
\begin{itemize*}
    \item Maxwell
    \begin{itemize*}
        \item Hardware 
        \begin{itemize*}
            \item 40 GL390 Nodes: 12 cores per node, Intel Xeon 2.4 GHz,                 24 GB RAM 
            \item 6 SL250: 16 cores per node, Intel Xeon 2.60GHz, 32 GB                  RAM 
            \item 1 DL380 Headnode: 12 core, 48GB RAM 
            \item 24TB attached storage
        \end{itemize*}
        \item Interconnect
        \begin{itemize*}
            \item QDR Infiniband
        \end{itemize*}
        \item Software
            \begin{itemize*}
                \item CentOS 
                \item HP CMU cluster management utility 
                \item OpenMPI 
                \item Torque/Maui scheduler
            \end{itemize*}
    \end{itemize*}
    \item Planck
    \begin{itemize*}
        \item Hardware
        \begin{itemize*}
            \item 20 SL250 Nodes: 12 cores per node, Intel Xeon 2.8                      GHz, 24 GB RAM
            \item 15 x 3 NVIDIA M1060 with 240 cores each, 3 x 3                     \item NVIDIA M2070 with 448 cores each
            \item 11 TB attached storage
        \end{itemize*}
        \item Interconnect
        \begin{itemize*}
            \item QDR Infiniband
        \end{itemize*}
        \item Software
        \begin{itemize*}
            \item CentOS
            \item HP CMU cluster management utility
            \item OpenMPI
            \item Torque/Maui scheduler
        \end{itemize*}
    \end{itemize*}    
\end{itemize*}
    Our parallelization starts from an outer shell script which runs a sequential Python program
    that creates a fixed number of inner shell scripts, each running 1 instance of Treengeling on one full node. The Python program, which is started by the outer shell script, uses
    system calls to detect the completions of each of the inner shell scripts by detecting the existence of the output file
    for each inner shell script. When an inner shell script completes, a new inner shell script is fed to the queue until all
    the desired instances of Treengeling have been submitted, or until one instance of Treengeling returns a valid coloring. %The solution for each instance of Treengeling is saved in its respective output file and put in a common directory.
  \subsection{Choosing the Number of Special Vertices}
    We present some evidence that splitting approximately 4 ways using 2 special vertices is fastest, given current technical constraints. See Figure \ref{fig:scalable}, which plots run time against the number of nodes at which the problem was split.
    \begin{figure}%[h]
      \begin{center}
	\includegraphics[width=12.0cm]{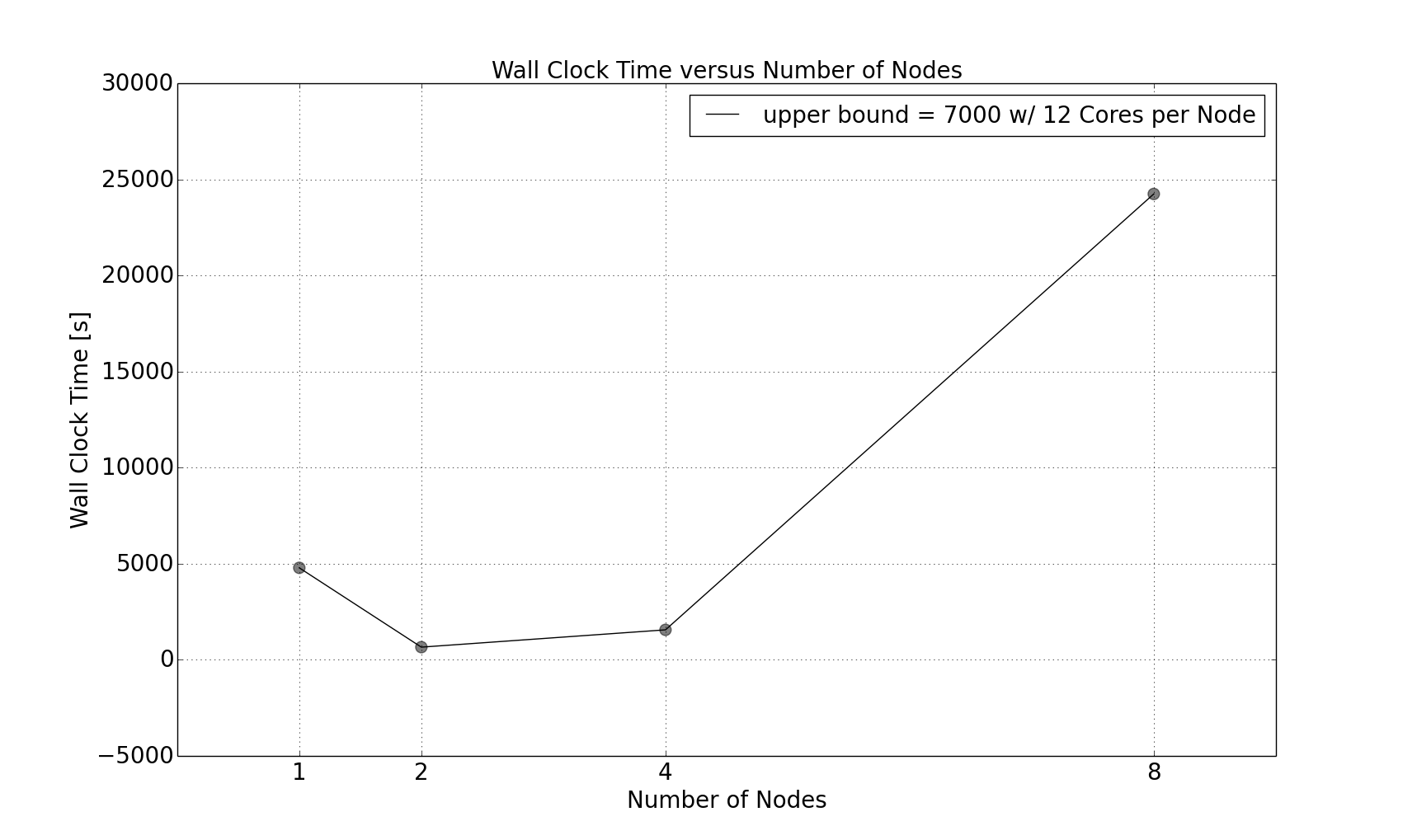}
	\caption{Some results of scalability testing.}
	\label{fig:scalable}
      \end{center}
    \end{figure}

\section{Results}
  \subsection{Performance of Treengeling}
    Treengeling can find solutions for all the Pythagorean triples with integers up to 7620 in about 21 hours. If
    the list of triples has been pendant removed, Treengeling can find a solution for all triples up to 7650 in about 33 hours. 
  \subsection{Freedom of Coloring and Searching for Patterns}
    We have looked for some patterns in the large valid colorings found, but have not found much. There seems to be a lot of freedom in how a coloring can be achieved, which seems to be a reason why we have not found any consistent patterns in the colorings. 
  
    \subsection{A Visualization of the Coloring up to 7664}
    The coloring in Figure \ref{fig:bigColoring} was made by removing the pendants from the list of triples with integers up to 7664, running the solver on the result, and adding back the triples, one at a time, in the reverse order that they were removed and 2-coloring them as they were added back. The final coloring was checked to see that every triple with all integers less than or equal to 7664 is bichromatic.
    
Integers which do not occur in any triple, with all constituent integers of the triple being less than or equal to  7664, are white. The solver colors every integer which it receives from any triple as either false or true. While some integers might be colorable as either false or true in a particular coloring, the solver will assign some color to each of these integers along with every other integer that it receives.
    \begin{figure}%[h]
      \begin{center}
	\includegraphics[width=12.0cm]{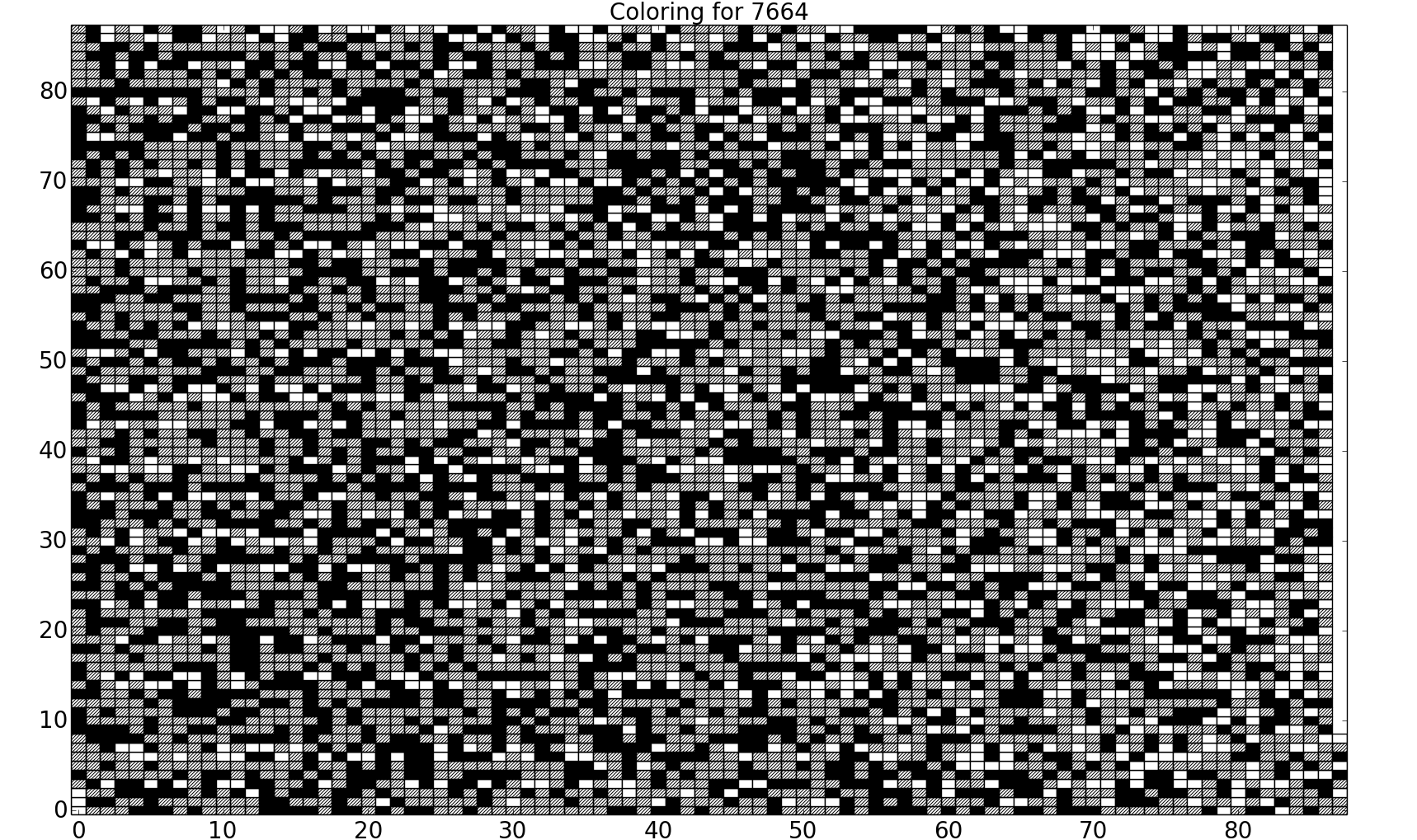}
	\caption{A coloring of the hypergraph of $\pyth$ induced by the vertex set [7664]. False is {\bf grey}. True is {\bf black}. Integers not appearing in the input to the solver are {\bf white}. The integer positions climb vertically by columns from the lower left to  the upper right.}
	\label{fig:bigColoring}
      \end{center}
    \end{figure} 

\section{Future Work}
  \subsection{Further Reducing the Input}
    We are running a sequential Python function on the clusters that systematically checks all possible linear 3-uniform
    hypergraphs on $n$ vertices to see if any further reductions to $\pyth$ can be made.
%    A subgraph of a big graph may be removable from the big graph depending on the subgraph and how it connects to the big graph. 
We have
    not yet found any ``removable'' subgraphs that are not pendants, from the triple list up to 7700.  We would like to parallelize this  sequential operation using Python and MPI, or C and MPI if need be.
    
\subsubsection{Acknowledgements}
Thanks to Ron Graham, Bill Kay, and Christopher Poirel for helpful discussions and ideas in the development of the present work. % Armin Biere kindly gave a helpful suggestion for parallelizing Treengeling across nodes of the computer cluster. 
Paul Sagona has been helpful with the high performance computing resources at the University of South Carolina.

\end{document}